\documentclass{article}
\usepackage[utf8]{inputenc}

\usepackage{amssymb,amsmath,epsfig}
\usepackage{mathtools}
\usepackage{epstopdf}
\usepackage{listings}
\usepackage{xcolor,soul}
\usepackage{bm,xspace}
\usepackage{comment}
\usepackage{xcolor,soul}
\usepackage{graphicx}
\usepackage{url}
\usepackage{float}
\usepackage{hyperref}
\usepackage{lipsum}
\usepackage{cite}

\usepackage{authblk}

\newcommand{\ber}{\begin{eqnarray}}
\newcommand{\eer}{\end{eqnarray}}

\newtheorem{remark}{\noindent Remark}


\newcommand{\be}{\begin{equation}}
\newcommand{\ee}{\end{equation}}
\newcommand{\bal}{\begin{align}}
\newcommand{\eal}{\end{align}}
\newcommand{\balnonum}{\begin{align*}}
\newcommand{\ealnonum}{\end{align*}}







\def\qed{\hfill \vrule height1.3ex width1.2ex depth-0.1ex}

\newtheorem{theorem}{\noindent Theorem}

\newtheorem{proposition}{\noindent Proposition}

\newtheorem{corollary}{\noindent Corollary}

\newtheorem{lemma}{\noindent Lemma}

\newtheorem{definition}{\noindent Definition}

\newenvironment{proof}{\noindent{\bf Proof:}}

\newcommand{\dominate}{\succcurlyeq}

\newcommand{\sdominate}{\succ}

\newcommand{\maxset}[2]{\mathcal{P}_{#1,#2}}
\newcommand{\weakmaxset}[2]{\mathcal{Q}_{#1,#2}}

\title{A phase transition for the probability of being a maximum among random vectors with general iid coordinates} 
\date{\today}

\author{Royi Jacobovic\footnote{This author was supported by the GIF Grant 1489-304.6/2019.} \: and Or Zuk}

\affil{Dept. of Statistics and Data Science, the Hebrew University of Jerusalem}

\begin{document}

\maketitle

\abstract{Consider $n$ iid real-valued random vectors of size $k$ having iid coordinates with a general distribution function $F$. A vector is a maximum if and only if there is no other vector in the sample that weakly dominates it in all coordinates. Let $p_{k,n}$ be the probability that the first vector is a maximum. The main result of the present paper is that if $k\equiv k_n$ grows at a slower (faster) rate than a certain factor of $\log(n)$, then $p_{k,n} \rightarrow 0$ (resp. $p_{k,n}\rightarrow1$) as $n\to\infty$. Furthermore, the factor is fully characterized as a functional of $F$. We also study the effect of $F$ on $p_{k,n}$, showing that while $p_{k,n}$ may be highly affected by the choice of $F$, the phase transition is the same for all distribution functions up to a constant factor.}

\section{Introduction}
Consider a model with a sample of $n$ iid random vectors of size $k$. It is assumed that the coordinates are iid real-valued random variables having a general distribution function $F$. A vector is said to be a (strong) maximum if and only if (iff) there is no other vector in the sample that (weakly) dominates it in all coordinates. Let $p_{k,n}$ be the probability that the first vector is a maximum. Once $k$ (resp. $n$) is fixed, then $p_{k,n}\rightarrow0$ (resp. $p_{k,n}\rightarrow1$) as $n\to\infty$ (resp. $k\to\infty$). The main contribution of the present work is a generalization of this straightforward observation by allowing $k$ to be determined as a function of $n$. Namely, we will show that if $k\equiv k_n$ grows at a slower (resp. faster) rate than $\gamma \log(n)$, then $p_{k,n}\rightarrow0$ (resp. $p_{k,n}\rightarrow1$) as $n\to\infty$, where $\gamma \in (0,1]$ is a certain constant that depends on the distribution $F$. The derivation of this result uses extreme value theory, and in particular relies on a result of Ferguson \cite{Ferguson1993} about the asymptotic behaviour of a maximum of an iid sequence of geometric random variables.

The asymptotic behaviour of $p_{k,n}$ has an important role in many applications. For example, in the analysis of linear programming \cite{blair1986random} and of maxima-finding algorithms \cite{chen2012maxima, devroye1999note,dyer1998dominance,golin1994provably,tsai2003efficient}. Furthermore, it is also related to game theory \cite{o1981number} and the analysis of random forest algorithms \cite{biau2016random,scornet2015consistency}. This literature focuses mainly on asymptotic results once $F$ is a continuous function, $k$ is fixed and $n$ tends to infinity \cite{bai1998variance,bai2005maxima,barbour2001number,barndorff1966distribution,baryshnikov2000supporting,hwang2004phase,o1981number}. Both \cite{o1981number} and \cite{barndorff1966distribution} contain an approximation of the expected number of maxima. In addition, an approximation of the variance of the number of maxima is given in \cite{bai1998variance} and asymptotic normality of this number was proved in \cite{bai2005maxima}. 

To the best of our knowledge, the only paper that includes asymptotic results as $n\to\infty$ and $k$ is determined as a function of $n$ is \cite{hwang2004phase}. 
In the last equation of Section 1.1 of \cite{hwang2004phase} there is a first order approximation of $p_{k,n}$. This approximation holds uniformly for all possible forms of variations of $k$ as a function of $n$, as $n \to \infty$. In particular, it yields existence of a non-trivial phase-transition at $k\approx\log(n)$ which is consistent with our findings. While \cite{hwang2004phase} refers only to a continuous $F$, the current results hold for a general $F$.

The rest is organized as follows: Section \ref{sec: main result} contains a precise description of the model with a statement of the main result. In particular, the functional $\gamma$ of $F$ that determines the localization of the phase transition is presented (with the proof deferred to Section \ref{sec: proof}). Section \ref{sec: examples} is devoted to exploring the effect of the distribution $F$ on the probability $p_{k,n}$, with two important special cases: Section \ref{subsec: continuous distribution} is about the continuous case and includes a detailed discussion of the relation between the current results and the approximation that appears in \cite{hwang2004phase}. Section \ref{subsec: bernoulli} is about a simple example in which the coordinates have a Bernoulli distribution. This example illustrates two points:
\begin{enumerate}
 \item While $p_{k,n}$ is the same for every continuous $F$,
 once the continuity assumption is relaxed changing the distribution $F$ can change drastically the first-order asymptotic behaviour of $p_{k,n}$ for fixed $k$ as $n \to \infty$. 
In contrast, when both $k,n \to \infty$, the phase-transition for $p_{k,n}$ is the same up to a multiplicative factor $\gamma$ for all distribution
functions $F$.
 
 \item Even for a special case in which there is a simple exact combinatorial formula for $p_{k,n}$, it is unclear how to utilize this formula in order to derive the main result directly. 
\end{enumerate}

\section{Model description and the main result}\label{sec: main result}
In the sequel, for every set $A$ and a potential element $a$, denote the corresponding indicator function
\begin{equation}
 \textbf{1}_A(a)\equiv\begin{cases} 
 1, & a\in A, \\
 0, & a\notin A.
 \end{cases}
\end{equation}
In addition, in several places of this manuscript we denote the minimum (resp. maximum) of some real numbers $x_1,x_2,\ldots,x_n$ by $\underset{i}{\wedge} x_i \equiv \min_i x_i$ (resp. $\underset{i}{\vee} x_i \equiv \max_i x_i$). In particular, when $n=2$, then we simply write $x_1\wedge x_2$ (resp. $x_1\vee x_2$). 

\subsection{Multivariate maximum}
The following is a common definition of a maximum of a set of vectors in $\mathbb{R}^k$. It is based on the product order $\preceq$ on $\mathbb{R}^k$, \textit{i.e.,} for every two vectors $a,b\in\mathbb{R}^k$ such that $a=(a_1,a_2,\ldots,a_k)$ and $b=(b_1,b_2,\ldots,b_k)$ define 
\begin{equation}
 a\preceq b\Leftrightarrow \left(a_i\leq b_i\ , \ \forall 1\leq i\leq k\right).
\end{equation}
Similarly, define 
\be
a\prec b \Leftrightarrow \left( a\preceq b \ and \ \exists i \in [k] \: s.t. \: a_i < b_i \right).
\ee

\begin{definition}\label{def: maximum}
Let $x_1,x_2,\ldots,x_n$ be $n$ vectors in $\mathbb{R}^k$. In addition, let $\preceq$ be the product order on $\mathbb{R}^k$. Then, for each $1\leq i\leq n$, $x_i$ is a maximum with respect to $x_1,x_2,\ldots,x_n$ iff there is no $j\neq i$ such that $x_i\preceq x_j$. In addition, the set of maxima with respect to $x_1,x_2,\ldots,x_n$ is called the Pareto-front generated by $x_1,x_2,\ldots,x_n$.
\end{definition}

\begin{remark}
\normalfont Definition \ref{def: maximum} refers to a \textit{strong} maximum. To see this, consider the special case in which $k=1,n\geq2$ and $x_1=x_2=\ldots=x_n$. In this case, $x_1,x_2,\ldots,x_n$ are all maxima in the usual sense but none of them is a maximum in the sense of Definition \ref{def: maximum}. \end{remark}

\begin{remark}
\normalfont It is possible to have multiple maxima in the sense of Definition \ref{def: maximum}. For instance, assume that $n=k=2$ and consider the case in which $x_1=(1,0)$ and $x_2=(0,1)$. 

\end{remark}
\begin{remark}\normalfont
\label{remark:weak_strong}
It is natural to introduce another notion of multivariate maximum: $x_i$ is a {\it weak} maximum with respect to $x_1,x_2,\ldots,x_n$ iff there is no $j\neq i$ such that $x_i\prec x_j$. Correspondingly, the set of weak maxima with respect to $x_1,x_2,\ldots,x_n$ is called the \textit{weak} Pareto-front generated by $x_1,x_2,\ldots,x_n$. Later, in Section \ref{subsec: bernoulli} we discuss this notion once the coordinates have a Bernoulli distribution. 
\end{remark}

\subsection{Problem description}
Let $\left\{X_{ij} ;i,j\geq1\right\}$ be an infinite array of iid real-valued random variables having a distribution function $F$. For every $i,k\geq1$ denote $X_i^{k}\equiv\left(X_{i1},\ldots,X_{ik}\right)$ and for every $k,n\geq1$, 
let $\maxset{k}{n} \subset \{1,2,\ldots,n\}$ be the random set of indices of all vectors that belong to the Pareto-front generated by the random vectors $X_1^k,X_2^k,\ldots,X_n^k$. 
Also, for every event $B$, denote the complement by $\overline{B}$. Then, observe that $\textbf{1}_{\maxset{k}{n}}(1)$ is equal to one iff the event 
\begin{equation}
 A_{k,n}\equiv \bigcap\limits_{j=2}^n \overline{\left\{X_1^k\preceq X_j^k\right\}} \: \end{equation}
occurs. Moreover, note that for every  sequence $(k_n)_{n\geq1}$ of positive integers, $\textbf{1}_{\maxset{k_n}{n}}(1)\xrightarrow{n\to\infty}1$ (resp. $\textbf{1}_{\maxset{k_n}{n}}(1)\xrightarrow{n\to\infty}0$) $P$-a.s. iff 
\begin{equation}
    P\left(\liminf_{n\to\infty} A_{k_n,n}\right)=1\ \ , \ \ \left[\text{resp. } P\left(\limsup_{n\to\infty} A_{k_n,n}\right)=0\right].
\end{equation}
An initial observation is that:
\begin{enumerate}
 \item For every fixed $k\geq1$, $\textbf{1}_{\maxset{k}{n}}(1)\xrightarrow{n\to\infty}0$ , $P$-a.s.
 
 \item For every fixed $n\geq1$, $\textbf{1}_{\maxset{k}{n}}(1)\xrightarrow{k\to\infty}1$ , $P$-a.s.
\end{enumerate}
The main question is how to generalize this observation by characterizing the asymptotic behaviour of $\textbf{1}_{\{\maxset{k_n}{n}\}}(1)$ as $n\to\infty$ for a general sequence $(k_n)_{n=1}^\infty$? 

\subsection{Main result}
Let $X$ be a random variable with a cumulative distribution function $F$.
Define the function $S:\mathbb{R}\rightarrow[0,1]$ as $S(x)\equiv P(X \geq x)$. When $F$ is continuous, $S$ is the corresponding survival function. Next, define 
\be
\gamma \equiv \gamma_F \equiv -E\log\left[S(X)\right].
\label{eq:gamma_def}
\ee
The following theorem is the main result. Its proof is given in Section \ref{sec: proof}. 

\begin{theorem}\label{thm: main result1}

Let $k_1,k_2,\ldots$ be a sequence of positive integers \begin{description}
\item[(a)] If
\be\label{eq: condition1}
\liminf_{n\to\infty}\frac{k_n}{\log(n)}>\gamma^{-1}\,,
\ee 
then 
\be
\textbf{1}_{\maxset{k_n}{n}}(1)\xrightarrow{n\to\infty}1\, \ \ P\text{-a.s.}
\ee
		
\item[(b)] If 
\be\label{eq: condition2}
\limsup_{n\to\infty}\frac{k_n}{\log(n)}< \gamma^{-1}\,,
\ee 
then
\be
\textbf{1}_{\maxset{k_n}{n}}(1)\xrightarrow{n\to\infty}0, \ \ P\text{-a.s.}
\ee
\end{description}
\end{theorem}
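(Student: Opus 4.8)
My plan is to turn the combinatorial maximality event into an analytic statement about a sum of iid variables, and then to upgrade the resulting convergence of probabilities into almost-sure convergence by a subsequence argument that exploits monotonicity in both parameters. The first step is an exact formula. Conditioning on $X_1^k=x=(x_1,\dots,x_k)$, a fixed competitor $X_j^k$ with $j\ge 2$ satisfies $x\preceq X_j^k$ with probability $q(x)=\prod_{l=1}^k S(x_l)$, since the coordinates are independent and $P(X_{jl}\ge x_l)=S(x_l)$. As the $n-1$ competitors are iid and independent of $X_1^k$, I obtain $p_{k,n}=E\big[(1-q(X_1^k))^{n-1}\big]$. Writing $q(X_1^k)=e^{-W_k}$ with $W_k\equiv\sum_{l=1}^k Y_l$ and $Y_l\equiv-\log S(X_{1l})$, the $Y_l$ are iid, nonnegative, with mean $\gamma$. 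The elementary fact $P(S(X)<u)\le u$ gives $P(Y_1>t)\le e^{-t}$, so $Y_1$ has exponential tails and $\gamma\in(0,1]$; this is exactly where $F$ enters, only through $\gamma$.

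Next I would read off the location of the threshold and convergence in probability. Using $1-(n-1)q\le(1-q)^{n-1}\le e^{-(n-1)q}$, the integrand is governed by the sign of $W_k-\log n$, because $(n-1)q=(n-1)e^{-W_k}$. Splitting the expectation according to whether $W_k$ falls on its expected side of an intermediate level lying between $k\gamma$ and $\log n$, I would bound $1-p_{k,n}$ (when $k\gamma>(1+\delta)\log n$) and $p_{k,n}$ (when $k\gamma<(1-\delta)\log n$) by a polynomial term $n^{-c}$, produced on the ``good'' event by the soft inequalities, plus a tail term $P(|W_k-k\gamma|>c'k)$ on the complementary event. Since $W_k$ is a sum of $k$ iid variables with exponential tails, a Cram\'er/Chernoff bound makes the tail term exponentially small in $k$. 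This pins the transition at $k\gamma\approx\log n$, i.e. $k/\log n\approx\gamma^{-1}$, and already yields convergence of $p_{k_n,n}$ to $0$ or $1$.

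The hard part is upgrading this to almost-sure convergence. The obstacle is that the estimates above only give $1-p_{k_n,n}$ and $p_{k_n,n}$ of order $n^{-\delta}$ with $\delta>0$ possibly tiny, so $\sum_n(1-p_{k_n,n})$ and $\sum_n p_{k_n,n}$ need not converge and Borel--Cantelli cannot be applied to the full sequence. I would circumvent this using two monotonicities of the event $\{1\in\maxset{k}{n}\}$: it is decreasing in $n$ (extra competitors can only destroy maximality) and increasing in $k$ (being dominated in $k'>k$ coordinates forces being dominated in the first $k$). On the geometric subsequence $n_m=2^m$, writing $\underline k_m,\bar k_m$ for the extremes of $k_n$ over $n_m\le n<n_{m+1}$, these yield the block inclusions $\{1\in\maxset{k_n}{n}\}\subseteq\{1\in\maxset{\bar k_m}{n_m}\}$ (used for part (b)) and $\{1\notin\maxset{k_n}{n}\}\subseteq\{1\notin\maxset{\underline k_m}{n_{m+1}}\}$ (used for part (a)), valid for every $n$ in the block. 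Because $\log n_{m+1}\sim\log n_m$, the block probabilities $p_{\bar k_m,n_m}$ and $1-p_{\underline k_m,n_{m+1}}$ retain the off-critical gap and, by the previous step, decay like a sequence summable in $m$. Borel--Cantelli then shows that almost surely only finitely many blocks fail, which is exactly $\textbf{1}_{\maxset{k_n}{n}}(1)\to1$ in part (a) and $\to0$ in part (b).

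I expect the genuine work to be twofold: the two-sided monotonicity bookkeeping that makes the dyadic-block sandwiching uniform over each block, and the large-deviation control of $W_k$ at deviations proportional to $k$. The latter is where the exact rate and the full dependence on $F$ (through the law of $-\log S(X)$) reside, and in the atomic case it is precisely the point at which a geometric-maxima estimate in the spirit of Ferguson becomes the natural tool; everything else is the soft-inequality accounting of the first two steps.
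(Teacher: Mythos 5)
Your route is genuinely different from the paper's. The paper never manipulates $p_{k,n}$ analytically: it encodes maximality through the variables $G_i^1$ (the number of leading coordinates in which competitor $i$ fails to beat vector $1$), so that $\{1\in\maxset{k}{n}\}$ becomes $\{M_n^1\le k\}$ for the running maximum $M_n^1$ of these conditionally geometric variables; it then sandwiches $M_n^1$, eventually a.s., between maxima of genuinely iid geometric sequences with parameters on either side of $e^{-\gamma}$ (a record-time Borel--Cantelli coupling, Lemma~\ref{proposition: basic lemma}), and imports Ferguson's theorem $M_n^{(\alpha)}/\log n\to-1/\log(1-\alpha)$ a.s.\ to conclude $M_n^1/\log n\to\gamma^{-1}$ a.s., from which both parts follow at once. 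You instead work with the exact representation $p_{k,n}=E\big[(1-e^{-W_k})^{n-1}\big]$, prove off-critical polynomial bounds $n^{-c}$ by Chernoff control of $W_k$, and upgrade to a.s.\ convergence by dyadic blocking. Your blocking step is sound: both monotonicities (decreasing in $n$, increasing in $k$) are correct, the block inclusions are the right ones, and since $\log n_{m+1}\sim\log n_m$ the off-critical gap survives at the block endpoints, so the block probabilities are geometrically summable and Borel--Cantelli applies. What your approach buys is a self-contained, quantitative proof (no external geometric-maxima theorem, explicit $n^{-c}$ rates); what it loses is the paper's stronger intermediate statement $M_n^1/\log n\to\gamma^{-1}$ a.s., which is of independent interest.

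One step needs repair. For part (b) you bound the bad event by the tail term $P(|W_k-k\gamma|>c'k)$ and claim Chernoff makes it exponentially small in $k$; but under \eqref{eq: condition2} the sequence $k_n$ may be bounded or grow like $o(\log n)$ (e.g.\ $k_n\equiv 1$), in which case $P(|W_k-k\gamma|>c'k)$ is a fixed positive constant and the resulting block bound is not summable, so the argument as written fails in exactly this regime. The fix stays inside your toolkit: either place the splitting level at $t=(1-\delta/2)\log n$ rather than at distance $c'k$ from the mean, and bound $P(W_k>t)\le e^{-\lambda t}\big(Ee^{\lambda Y_1}\big)^{k}$ for a small fixed $\lambda\in(0,1)$ --- finite precisely because of your tail estimate $P(Y_1>s)\le e^{-s}$ --- which gives $n^{-c}$ uniformly over all $k\le(1-\delta)\gamma^{-1}\log n$; or invoke the monotonicity in $k$ that you already use for the blocks, namely $p_{k_n,n}\le p_{k_n',n}$ with $k_n'=\lfloor(1-\delta)\gamma^{-1}\log n\rfloor$, to reduce part (b) to the regime $k\asymp\log n$ where your stated tail bound is accurate. (Part (a) has no such issue, since there $k_n\ge(1+\delta)\gamma^{-1}\log n\to\infty$ and exponential-in-$k$ decay is automatically polynomial in $n$.) With this correction your proposal is a complete and valid alternative proof.
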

For every $k,n\geq1$, denote 
\be
p_{k,n}\equiv P\left(A_{k,n}\right)=E\textbf{1}_{\maxset{k}{n}}(1).
\label{eq:p_k_n_def}
\ee
Then, an application of bounded convergence theorem yields the following corollary.

\begin{corollary}\label{cor: main corollary}
Let $k_1,k_2,\ldots$ be a sequence of positive integers.
\begin{description}
\item[(a')] 
\be\label{eq: condition first part}
\liminf_{n\to\infty}\frac{k_n}{\log(n)}>\gamma^{-1} \Rightarrow \lim_{n \to \infty} p_{k_n,n} = 1.
\ee

\item[(b')]
\be\label{eq: condition second part}
\limsup_{n\to\infty} \frac{k_n}{\log(n)}<\gamma^{-1} \Rightarrow \lim_{n \to \infty} p_{k_n,n} = 0.
\ee
\end{description} 
\end{corollary}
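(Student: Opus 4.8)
The plan is to deduce the corollary directly from Theorem \ref{thm: main result1} by upgrading the almost-sure convergence of the indicator random variables to convergence of their expectations. For each $n$ set $Y_n \equiv \textbf{1}_{\maxset{k_n}{n}}(1)$; this is a single sequence of random variables defined on the common probability space carrying the infinite array $\left\{X_{ij};i,j\geq1\right\}$, and each $Y_n$ takes values in $\{0,1\}$. By the definition in \refm{eq:p_k_n_def} we have $EY_n = p_{k_n,n}$, so it suffices to show that $EY_n$ converges to the asserted limit in each of the two cases.

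First I would treat case (a'). Under the hypothesis $\liminf_{n\to\infty} k_n/\log(n) > \gamma^{-1}$, Theorem \ref{thm: main result1}(a) gives $Y_n \to 1$, $P$-a.s. Since $0 \leq Y_n \leq 1$ and the constant function $1$ is integrable, the bounded (equivalently, dominated) convergence theorem applies and yields $p_{k_n,n} = EY_n \to E[1] = 1$. Case (b') is identical in structure: the hypothesis $\limsup_{n\to\infty} k_n/\log(n) < \gamma^{-1}$ together with Theorem \ref{thm: main result1}(b) gives $Y_n \to 0$, $P$-a.s., and the same domination by the integrable constant $1$ lets the bounded convergence theorem conclude $p_{k_n,n} \to 0$.

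The only point that deserves care — and it is the closest thing to an obstacle here — is that the expectation in \refm{eq:p_k_n_def} and the almost-sure statement of Theorem \ref{thm: main result1} must refer to the \emph{same} probability space, so that the sequence $(Y_n)$ whose pointwise limit is controlled is exactly the sequence whose expectations equal the $p_{k_n,n}$. This is guaranteed by the construction of the model on a single infinite array, so no coupling or subsequence argument is needed and the boundedness $|Y_n|\leq 1$ supplies the required integrable dominating function for free. All of the genuine work is contained in Theorem \ref{thm: main result1}; granting that result, the corollary follows immediately.
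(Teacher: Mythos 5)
Your proof is correct and follows exactly the paper's route: the paper derives the corollary from Theorem \ref{thm: main result1} by ``an application of bounded convergence theorem,'' which is precisely your argument, spelled out in more detail (including the observation that the indicators and the probabilities $p_{k_n,n}$ live on the same probability space). Nothing is missing; the corollary indeed reduces entirely to Theorem \ref{thm: main result1} plus dominated convergence with the constant bound $1$.
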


\subsection{The factor $\gamma$}
\label{sec: gamma}
Define
\begin{equation}
S^{-1}(y)\equiv\inf\left\{x\in\mathbb{R};S(x)\leq y\right\}\ \ , \ \ y\in(0,1).
\end{equation}
Since $S$ is a nonincreasing leftcontinuous function, $S\left[S^{-1}(y)\right]\leq y$ for every $y\in(0,1)$.
By definition, $-\log\left[S(X)\right]\geq0$ and hence 
is well-defined and nonnegative. Furthermore, the usual formula for an expectation of a nonnegative random variable yields that
\begin{align}
\gamma &=\int_0^\infty P\left[-\log\left[S(X)\right]>t\right]dt\\&=\int_0^\infty P\left[S(X)<e^{-t}\right]dt\nonumber\\&=\int_0^\infty P\left[X>S^{-1}\left(e^{-t}\right)\right]dt\nonumber\\&=\int_0^\infty S\left[S^{-1}\left(e^{-t}\right)\right]dt\nonumber\\&\leq \int_0^\infty e^{-t}dt=1.\nonumber
\end{align}
When $F$ is continuous, the last inequality above holds with equality and $\gamma=1$.
Moreover, $\gamma=0$ if and only if $S\equiv1$, which means that $X$ is infinite. Thus, the assumption that $X$ is real-valued implies that $\gamma\in(0,1]$.
For example, when the coordinates have a $\text{Bernoulli}(p)$ distribution for some $p\in(0,1)$, 
\be
 S(x)=\begin{dcases}
 1, & x \leq 0, \\
 p, & 0 < x \leq 1,\\ 
 0, & 1 < x. \\
 \end{dcases}
\ee
Therefore, 
\begin{equation}
 \gamma= - p \log(S(1)) - (1-p)\log(S(0)) = - p \log(p)
\end{equation}
and hence $\gamma=e^{-1} \approx 0.368$ is the maximal value of $\gamma$ for the Bernoulli case, obtained at $p = e^{-1}$.

\section{The effect of the distribution $F$}
\label{sec: examples}
In this section we study the effect of the distribution $F$ of the individual variables $X_{ij}$, on the distribution of the number of maxima.
We specify the dependence on $F$ explicitly, denoting $\maxset{k}{n}^{(F)}$ the (random) maximal set and $p_{k,n}^{(F)}$ the probability of being a maxima when $X_{ij} \sim F$. Similarly, we denote by $\weakmaxset{k}{n}^{(F)}$ the weak Pareto-front generated by $X_1^k,..,X_n^k$ (see Remark \ref{remark:weak_strong}), and define 
\begin{equation}
 q_{k,n}^{(F)}\equiv P\left(1\in\weakmaxset{k}{n}^{(F)}\right)=E\textbf{1}_{\weakmaxset{k}{n}^{(F)}}(1).
\end{equation}
By definition $X_j^k \sdominate X_i^k \Rightarrow X_j^k \dominate X_i^k$, 
hence $\maxset{k}{n}^{(F)} \subseteq \weakmaxset{k}{n}^{(F)}$ and $p_{k,n}^{(F)} \leq q_{k,n}^{(F)}$. In particular, when $F$ is continuous, $\maxset{k}{n}^{(F)} = \weakmaxset{k}{n}^{(F)}$, $P$-a.s., hence $p_{k,n}^{(F)}=q_{k,n}^{(F)}$. Moreover,
let $U(\cdot)$ be the uniform distribution function on $[0,1]$ and observe that every continuous $F$ satisfies the relation
\begin{equation}
    p^{(U)}_{k,n} = p_{k,n}^{(F)} = q_{k,n}^{(F)}=q_{k,n}^{(U)}.
\end{equation}
Proposition \ref{prop:weak_strong_different_f} below shows that the continuous and the Bernoulli distributions are extreme cases, in the sense that for every distribution $F$, the probability of being a (strong) maxima lies between them. To shorten notation, for every $p\in(0,1)$, let $p_{k,n}^{(p)}$ be the probability of being a maximum once the coordinates have a $\text{Bernoulli}(p)$ distribution.

\begin{proposition}
\label{prop:weak_strong_different_f}
	 Let $F$ be a general distribution function. Then, 
	\begin{enumerate}
	 \item $p_{k,n}^{(F)} \leq p^{(U)}_{k,n}$.
	 
	 \item $p_{k,n}^{(p)} \leq p_{k,n}^{(F)}$ for every $p \in \left\{1-F(x);x\in\mathbb{R}\right\}$. 
	\end{enumerate} 
	\end{proposition}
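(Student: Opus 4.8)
The plan is to obtain both inequalities from a single monotonicity principle: applying a common nondecreasing map to every coordinate of every vector preserves the product order, and therefore can only enlarge the collection of dominations and shrink the event that the first vector is a maximum. Concretely, I would first record the elementary lemma that if $g:\mathbb{R}\to\mathbb{R}$ is nondecreasing and we set $Y_i^k\equiv\left(g(X_{i1}),\ldots,g(X_{ik})\right)$, then $X_i^k\preceq X_j^k\Rightarrow Y_i^k\preceq Y_j^k$ for all $i,j$. Hence if some $j\neq 1$ witnesses that $X_1^k$ is not a maximum, the same $j$ witnesses that $Y_1^k$ is not a maximum; equivalently $1\in\maxset{k}{n}(Y)\Rightarrow 1\in\maxset{k}{n}(X)$. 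Since these events live on a common probability space, monotonicity of probability gives $P\big(1\in\maxset{k}{n}(Y)\big)\leq P\big(1\in\maxset{k}{n}(X)\big)$. Intuitively, coarsening the coordinates only manufactures extra ties and dominations, so it cannot help the first vector be a maximum.

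Part 2 is then immediate. Writing $p=1-F(x_0)$ for a suitable $x_0$ and taking $g(x)=\textbf{1}\{x>x_0\}$, the variables $g(X_{ij})$ are iid $\mathrm{Bernoulli}(p)$ and $g$ is nondecreasing, so the lemma gives $p_{k,n}^{(p)}=P\big(1\in\maxset{k}{n}(g(X))\big)\leq P\big(1\in\maxset{k}{n}(X)\big)=p_{k,n}^{(F)}$. The hypothesis $p\in\{1-F(x);x\in\mathbb{R}\}$ is exactly what is needed to realize $\mathrm{Bernoulli}(p)$ as such a threshold of $X$.

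For Part 1 I would run the same lemma in the reverse direction, coupling the general sample to a continuous one via the distributional transform. Let $V_{ij}$ be iid $\mathrm{Uniform}(0,1)$ independent of the array, and set $\tilde X_{ij}\equiv F(X_{ij}^{-})+V_{ij}\big(F(X_{ij})-F(X_{ij}^{-})\big)$. Standard properties of the distributional transform give that the $\tilde X_{ij}$ are iid $\mathrm{Uniform}(0,1)$ and that $X_{ij}=F^{-1}(\tilde X_{ij})$ almost surely, with $F^{-1}$ the nondecreasing quantile function. Applying the lemma with $g=F^{-1}$ to the continuous sample $\tilde X$ yields $p_{k,n}^{(F)}=P\big(1\in\maxset{k}{n}(X)\big)\leq P\big(1\in\maxset{k}{n}(\tilde X)\big)=p_{k,n}$, where the final equality uses that the continuous coordinates of $\tilde X$ produce no ties almost surely (so weak and strong maxima agree) and that the probability of being a maximum is invariant across continuous $F$.

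I expect the main obstacle to be Part 1: I must exhibit a monotone coupling between the general sample and a genuinely continuous one that breaks ties consistently yet preserves the order, and the distributional transform is the right device for this. The delicate verifications are that $X_{ij}=F^{-1}(\tilde X_{ij})$ holds almost surely even when $F$ has atoms or flat stretches, and that the resulting array $\tilde X$ is truly iid with continuous marginals, so that $P\big(1\in\maxset{k}{n}(\tilde X)\big)$ may be identified with $p_{k,n}$.
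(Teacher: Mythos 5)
Your proposal is correct and takes essentially the same approach as the paper: both rest on the observation that a common nondecreasing coordinatewise map preserves domination and hence can only shrink the Pareto-front, and your Part 2 (thresholding at $x_0$ to get Bernoulli coordinates) is the paper's argument verbatim. The only difference is in Part 1, where the paper simply realizes $X_{ij}=F^{-1}(U_{ij})$ from fresh iid uniforms --- legitimate since $p_{k,n}^{(F)}$ depends only on the law of the array --- whereas you reconstruct uniforms from the given $X_{ij}$'s via the distributional transform, a more laborious construction of the same monotone coupling whose extra verifications (that the $\tilde X_{ij}$ are iid uniform and $X_{ij}=F^{-1}(\tilde X_{ij})$ a.s.) are correct but avoidable.
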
	

	\begin{proof}
	\normalfont	$\text{}$
	\begin{enumerate}
	 \item  The random variables $X_{ij} \sim F$ can be realized by taking uniform random variables $U_{ij} \sim U$, and then taking the transformation $X_{ij} = F^{-1}(U_{ij})$, where $F^{-1}$ is the pseudo-inverse of $F$. Thus, since $F^{-1}$ is nondecreasing we have $U_{j}^k \dominate U_{i}^k \Rightarrow X_{j}^k \dominate X_{i}^k $ and hence
	$X_{i}^k \in \maxset{k}{n}^{(F)} \Rightarrow U_{i}^k \in \maxset{k}{n}^{(U)}$. Therefore, $\maxset{k}{n}^{(F)} \subseteq \maxset{k}{n}^{(U)}$ and hence $p_{k,n}^{(F)} \leq p^{(U)}_{k,n}$. 
	
	\item Take $x$ with $p \equiv 1-F(x)$ and define $B_{ij} = \textbf{1}_{\{X_{ij} > x\}}$. Since $B_{ij}$ is a nondecreasing transformation of $X_{ij}$, then $B_{i}^j \in \maxset{k}{n}^{(p)} \Rightarrow X_{i}^j \in \maxset{k}{n}^{(F)}$. As a result, $\maxset{k}{n}^{(p)} \subseteq \maxset{k}{n}^{(F)}$ and hence $p_{k,n}^{(p)} \leq p_{k,n}^{(F)}$. \qed
	\end{enumerate}
	\end{proof}

\begin{remark}\normalfont
While $p_{k,n}^{(F)} \leq p^{(U)}_{k,n}$ for any $F$ (i.e. discretization may only reduce the probability of being a {\it strong} maximum), 
there is no general ordering that always holds between $q_{k,n}^{(F)}$ and $q_{k,n}^{(U)}$. This is demonstrated numerically for the Bernoulli distribution in Section
\ref{subsec: numerics}.
\end{remark}

Since the values $p_{k,n}^{(F)}$ for every distribution $F$ of the $X_{ij}$'s can be bounded by the values for the continuous and Bernoulli case, we compare these two cases to study the effect of quantization on the probability of a random vector being a maximum.

\subsection{Continuous distribution}\label{subsec: continuous distribution}

For every $k,n\geq1$, there are well-known exact formulas for $p_{k,n}^{(U)}$ (see e.g. \cite{bai2005maxima}):
\begin{enumerate}
\item 
\be 
\label{eq: combinatorial formula}
p_{k,n}^{(U)} = \sum_{u=1}^n\binom{n-1}{u-1}\frac{(-1)^{u-1}}{u^k}.
\ee
	
	\item \begin{equation}\label{eq: recurrence}
	p_{k,n}^{(U)}=\begin{dcases}
	\frac{1}{n} \sum_{u=1}^n p_{k-1,u}^{(U)}, &k>1, \\
	\frac{1}{n}, &k=1,
	\end{dcases}
	\end{equation}
	and hence, for every $k>1$ one has
	\be
	p_{k,n}^{(U)}=\frac{1}{n} \sum_{u\in\mathcal{U}_{k,n}} \frac{1}{u_1 u_2 ... u_{k-1}}
	\label{eq:sum_u_k_formula}
	\ee
	where
	\be
	\label{eq: P(S)}
	\mathcal{U}_{k,n}\equiv\left\{u=(u_1,\ldots,u_{k-1})\in\mathbb{Z}^{k-1}\ ;\ 1\leq u_1\leq u_2\leq\ldots\leq u_{k-1}\leq n\right\}.
	\ee
\end{enumerate}
Furthermore, it is well known (see, e.g., \cite{barndorff1966distribution}) that for every fixed $k$, 
\be
p_{k,n}^{(U)}\sim\frac{\log^{k-1}(n)}{n (k-1)!}\ \ \text{as}\ \ n\to\infty.
\label{eq:continuous_fixed_k_asymptotics}
\ee
For a fixed $k$, other asymptotic results regarding the size of the Pareto-front as $n\to\infty$ include asymptotic formulas for the variance \cite{bai1998variance} and a corresponding central limit theorem \cite{bai2005maxima}. 

Hwang \cite{hwang2004phase} applied analytic techniques (see, \cite{hwang1998repartition},\cite{hwang1998poisson}) to these identities in order to derive an approximation of $p_{k,n}^{(U)}$ as $n\to\infty$ and $k$ is determined as a function of $n$. Specifically, let $\Phi(\cdot)$ be the cumulative distribution function of a standard normal random variable, and let $\Gamma(\cdot)$ be the Gamma function. Then, the first order approximation which appears in \cite{hwang2004phase} is 

\begin{equation}\label{eq: Hwang2004}
 p_{k,n}^{(U)}\sim\begin{cases} 
 \frac{\log^{k-1}(n)}{n(k-1)!}\Gamma\left[1-\frac{k}{\log(n)}\right], & \log(n)-k\gg\sqrt{\log(n)}, \\
 \Phi\left[\frac{k-\log(n)}{\sqrt{\log(n)}}\right], & |k-\log(n)|=o\left[\log^{\frac{2}{3}}(n)\right], \\
 1, & \sqrt{\log(n)}\ll k-\log(n), 
\end{cases}
\end{equation}
and it holds uniformly for all variations of $k$ as $n\to\infty$. Since $\gamma=1$ for every continuous $F$, it may be verified that \eqref{eq: Hwang2004} implies Corollary \ref{cor: main corollary}. However, since convergence in $P$ does not imply convergence $P$-a.s., it is not straightforward to deduce Theorem \ref{thm: main result1} from \eqref{eq: Hwang2004}, even for the continuous case. In fact, Hwang \cite{hwang2004phase} put forth the question of whether exists a probabilistic explanation for the phase-transition at $k\approx\log(n)$? Theorem \ref{thm: main result1} yields some probabilistic explanation for this phenomenon, although it does not supply a probabilistic proof of \eqref{eq: Hwang2004}.

\subsection{Bernoulli distribution}\label{subsec: bernoulli}
In this part, we present an example of a distribution function $F$ for which it is possible to derive an explicit combinatorial expression of $p_{k,n}$. As to be shown, even when such an expression is available, still it is unclear how Theorem \ref{thm: main result1} may be deduced from it (for this special case). Furthermore, this example demonstrates the possible differences between the model with a continuous $F$ versus discontinuous $F$.

Let $X_{ij}\sim\text{Bernoulli}\left(p\right)$ for some $p \in (0,1)$. Let $B_1=\sum_{j=1}^kX_{1j}\sim\text{Binom}\left(k,p\right)$ and without loss of generality assume that $X_{1j}=1$ for every $1\leq j\leq B_1$ and $X_{1j}=0$ for every $B_1+1\leq j\leq k$. By the law of total probability applied to $B_1$, 
\begin{align}
p_{k,n}^{(p)} &= \sum_{i=0}^k \binom{k}{i} p^i (1-p)^{k-i} \left[P( X^k_1 \not\preceq X^k_2 | B_1 = i)\right]^{n-1} \nonumber \\
&= \sum_{i=0}^k \binom{k}{i} p^i (1-p)^{k-i} \left[1 - P\left( \overset{i}{\underset{j=1}{\wedge}} X_{2j} = 1 \right) \right]^{n-1} \nonumber \\
&= \sum_{i=0}^k \binom{k}{i} p^i (1-p)^{k-i} \Big(1 - p^{i}\Big)^{n-1}
\label{eq:binary_p_k_n}.
\end{align}

where in the last equation above, when $i=0,n=1$ the last term $(1-p^i)^{n-1}=0^0$ is defined to be $1$. 
The asymptotic behaviour of $p_{k,n}^{(p)}$ for fixed $k$ and $n \to \infty$ follows directly from \eqref{eq:binary_p_k_n}.
Since $(1 - p^{i})^{n-1} = o\left[(1 - p^{k})^{n-1}\right]$ for all $i<k$ as $n\to\infty$, all terms in the above sum are negligible for large $n$ except for the last, giving the result 
\be
p_{k,n}^{(p)} \sim p^{k} (1-p^{k})^{n-1} \ \ \text{as}\ \ n\to\infty.
\ee 
\begin{remark} \normalfont
While \eqref{eq:binary_p_k_n} is an exact combinatorial formula for $p^{(p)}_{k,n}$, it is not straightforward to analyze the behaviour of this combinatorial formula as $n\to\infty$ when $k$ is determined as a general function of $n$. Theorem \ref{thm: main result1} gives us the asymptotic result for $p_{k,n}$ as $k,n \to \infty$ without relying on the exact expression.
\end{remark}

A similar calculation to the one in \eqref{eq:binary_p_k_n} gives the probability of a weak maximum, 
\begin{align}
q_{k,n}^{(p)} &= \sum_{i=0}^k \binom{k}{i} p^i (1-p)^{k-i} \left[P( X^k_1 \not\prec X^k_2 | B_1 = i)\right]^{n-1} \nonumber \\
&= \sum_{i=0}^k \binom{k}{i} p^i (1-p)^{k-i} \left[1 - P\left( \overset{i}{\underset{j=1}{\wedge}} X_{2j} = 1 \right) P\left( \overset{k}{\underset{j=i+1}{\vee}} X_{2j} = 1 \right) \right]^{n-1} \nonumber \\
&= \sum_{i=0}^k \binom{k}{i} p^i (1-p)^{k-i} \Big(1 - p^{i} + p^i(1-p)^{k-i}\Big)^{n-1},
\label{eq:binary_p_k_n_weak}
\end{align}
and the asymptotic result $q_{k,n}^{(p)} \to p^{k}$ for fixed $k$ as $n \to \infty$. 
\begin{remark} \normalfont
For any fixed $k$ the decay of $p^{(U)}_{k,n}=q^{(U)}_{k,n}$ is sub-linear in $n$ as $n \to \infty$ (see \eqref{eq:continuous_fixed_k_asymptotics}).
In contrast, $p_{k,n}^{(p)}$ decays to zero exponentially fast, whereas $q_{k,n}^{(p)}$ converges to a positive constant. 
The result is intuitive because for any fixed $k$ the number of possible vectors in the Bernoulli case is finite, and the vector $(1,..,1)$ (with $k$ coordinates) appears at least once $P$-a.s. as $n \to \infty$. 
A strong maximum may exist only if this vector appears at most once, an event with an exponentially small probability in $n$. Any occurrence of this vector is a weak maximum, yielding a limit positive probability not depending on $n$, $P\big(X_i^k = (1,..,1)\big)=p^k$. 
\end{remark}

For a complete treatment of the case in which the coordinates have $\text{Bernoulli}(p)$ distribution, we derive a combinatorial formula for the variance. For every $i,j\in\{0,1\}$ define
\begin{equation}
    B_{ij}\equiv\left|\left\{1\leq r\leq k; X_{1r}=i,X_{2r}=j\right\}\right|.
\end{equation}
and observe that vector $(B_{00}, B_{01}, B_{10}, B_{11})$ has a multinomial distribution, i.e., 
\be
(B_{00}, B_{01}, B_{10}, B_{11}) \sim Multinomial\Big(k, \big((1-p)^2, p(1-p), p(1-p), p^2\big)\Big).
\label{eq:multinom}
\ee
By conditioning on this random vector deduce that
\begin{align}
& E \textbf{1}_{\{1,2\in \maxset{k_n}{n}^{(p)}\}} \nonumber \\ 
&= \sum\limits_{\substack{a,d\geq 0 \, ; \, b,c > 0 : \\ a+b+c+d=k}} \binom{k}{a, b, c, d} \left[P( X^k_1, X^k_2 \not\preceq X^k_3 | B_{00} = a, B_{01} = b, B_{10} = c, B_{11} = d)\right]^{n-2} \nonumber \\
&= \sum\limits_{\substack{a,d \geq 0 \:;\: b,c \geq 1 : \\ a+b+c+d=k}} \binom{k}{a ,b ,c ,d} \Bigg[1 - P\Big( \Big\{ \big\{ \overset{a+b}{\underset{j=a+1}{\wedge}} X_{3j} = 1 \big\} \bigcup \big\{ \overset{k-d}{\underset{j=a+b+1}{\wedge}} X_{3j} = 1 \big\} \Big\} \nonumber \\
&\bigcap \Big\{ \overset{k}{\underset{j=k-d+1}{\wedge}} X_{3j} = 1 \Big\} \Big) \Bigg]^{n-2} \nonumber \\
&= \sum\limits_{\substack{a,d\geq 0 \:;\: b,c \geq 1 : \\ a+b+c+d=k}} \binom{k}{a ,b ,c ,d} \left[1 - p^d (p^b + p^{c} - p^{b+c}) \right]^{n-2}\,,
\end{align}
hence the variance is given by:
\be
V_{k,n}^{(p)} \equiv Var(|\maxset{k}{n}^{(p)}|) = n p_{k,n}^{(p)} (1-p_{k,n}^{(p)}) + n(n-1) [E \textbf{1}_{\left\{1,2\in \maxset{k_n}{n}^{(p)}\right\}} - {p_{k,n}^{(p)}}^2].
\label{eq:var_discrete}
\ee

\begin{remark} \normalfont
When $k$ is fixed and $n \to \infty$, both the expectation $np_{k,n}^{(p)}$ and variance $V_{k,n}^{(p)}$ approach to zero as $n \to \infty$, hence the limiting distribution of the Pareto-front size is degenerate. 
An interesting question for future work is whether there exists a sequence $k=k_n$ such that the limiting distribution of the Pareto-front size $|\maxset{k}{n}^{(p)}|$ is non-degenerate.
\end{remark}

\begin{remark}
\normalfont In this part we have analyzed the relatively simple case when the underlying distribution is Bernoulli. Naturally, a follow-up question is about studying other distributions with the goal of comparing between the results. 
\end{remark}

\subsection{Numerical Results}\label{subsec: numerics}
A numerical comparison between the Bernoulli and continuous cases is shown in Figure \ref{fig:discrete_phase_transition}. The difference in the asymptotic behaviour between $p_{k,n}^{(p)}, q_{k,n}^{(p)}$ and $p^{(U)}_{k,n}(=q^{(U)}_{k,n})$ for fixed $k$ as $n \to \infty$ is shown in Figure 1.a. A numerical demonstration for the different behaviour of $p^{(U)}_{k_n,n}$ for $k_n = c \log(n)$ when $c < 1$ and $c > 1$ is shown in Figure \ref{fig:discrete_phase_transition}.b. Similarly, 
the phase transition for $\text{Bernoulli}(0.5)$ is presented in Figure \ref{fig:discrete_phase_transition}.c, illustrating the localization at $\gamma = \frac{1}{2} \log(2)$, compared to $\gamma=1$ for the continuous case.

Furthermore, as we have already shown, for fixed $k$ the asymptotic behaviours of $p_{k,n}^{(p)}$ and $q_{k,n}^{(p)}$ as $n \to \infty$ are very different. However, when both $k,n \to \infty$, Figure \ref{fig:discrete_phase_transition}.c suggests that the phase transition established by Theorem \ref{thm: main result1} for $p_{k,n}^{(p)}$ also holds for $q_{k,n}^{(p)}$. Comparing the two cases more rigorously is left for future research.

For numerical calculation of $p^{(U)}_{k,n}$ we have used the recurrence relation \eqref{eq: recurrence}, because the alternating sum in the combinatorial formula \eqref{eq: combinatorial formula} causes numerical instabilities.
As a result, computing $p^{(U)}_{k,n}$ for fixed $k$ requires $O(n)$ operations, and $p^{(U)}_{k.n}$ was calculated for values up to $n=10^7$ in Figure \ref{fig:discrete_phase_transition}.b. 
In contrast, the discrete combinatorial formula \eqref{eq:binary_p_k_n} for $p_{k,n}^{(p)}$ can be applied directly, enabling us to compute this probability for much larger values of $n$ (up to $n \approx 10^{130}$) in Figure \ref{fig:discrete_phase_transition}.c.
The code for all numeric calculations is freely available at \href{https://github.com/orzuk/Pareto}{https://github.com/orzuk/Pareto}.

\begin{figure}[H]
	\centering
	\includegraphics[width=0.327\textwidth]{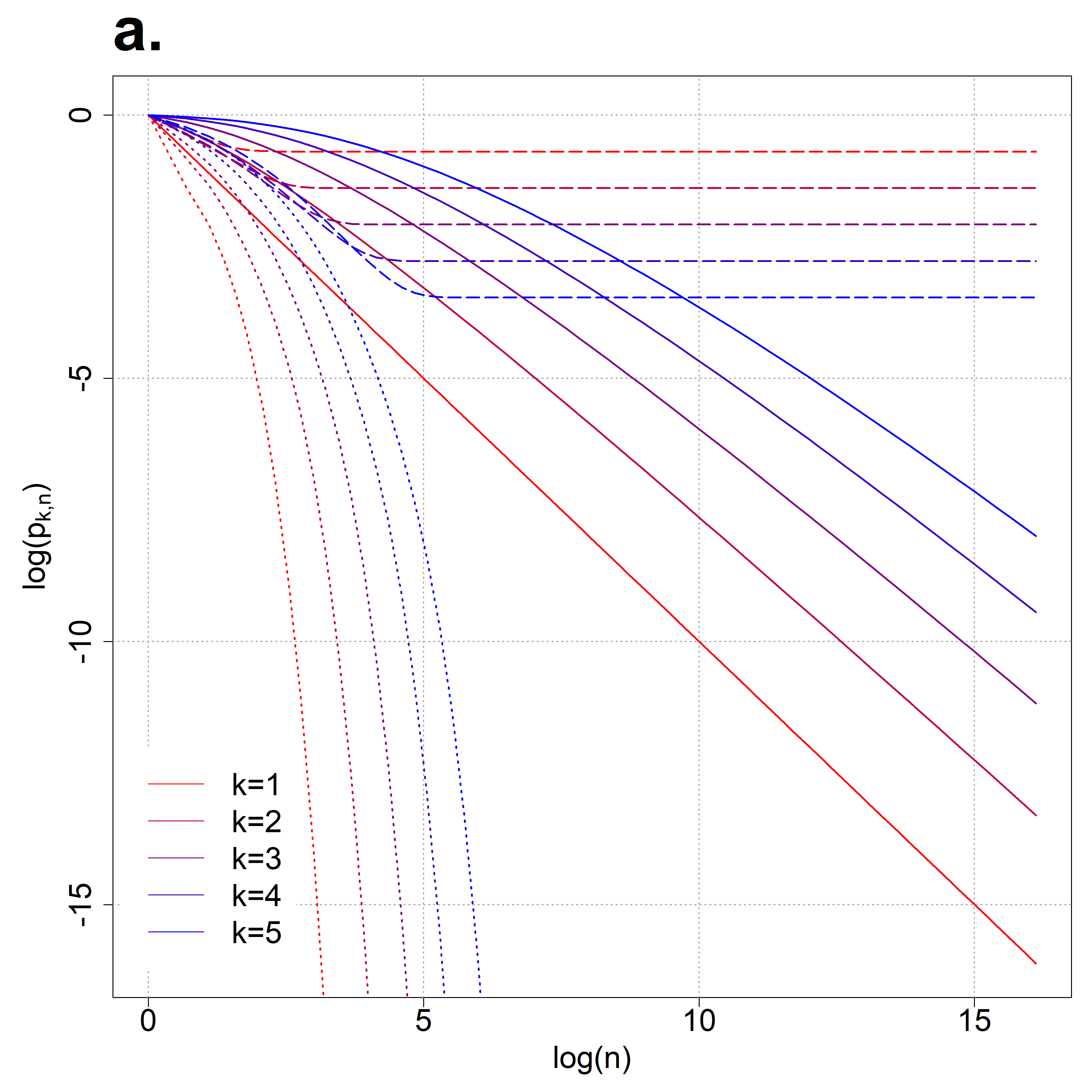} 
	\includegraphics[width=0.327\textwidth]{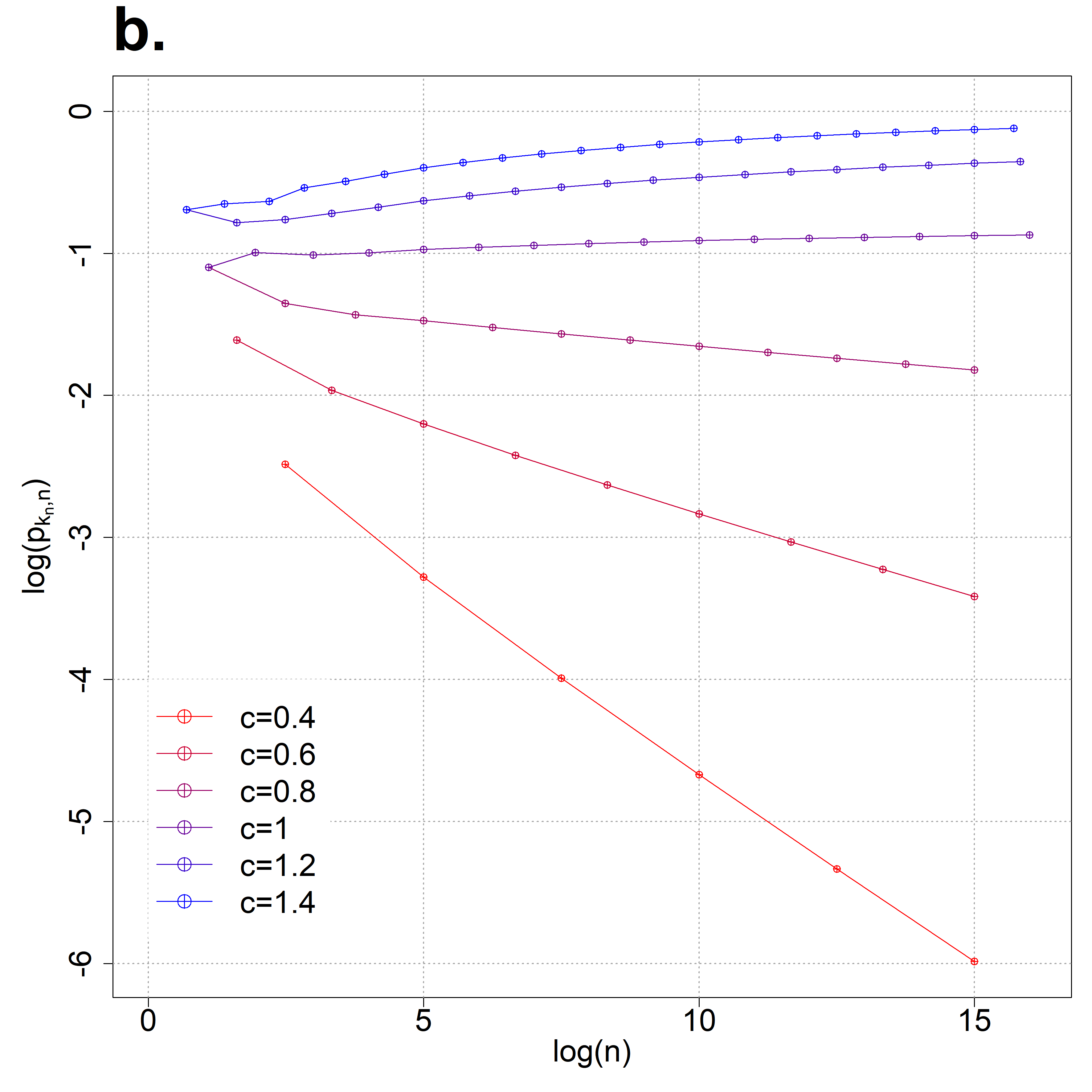}
	\includegraphics[width=0.327\textwidth]{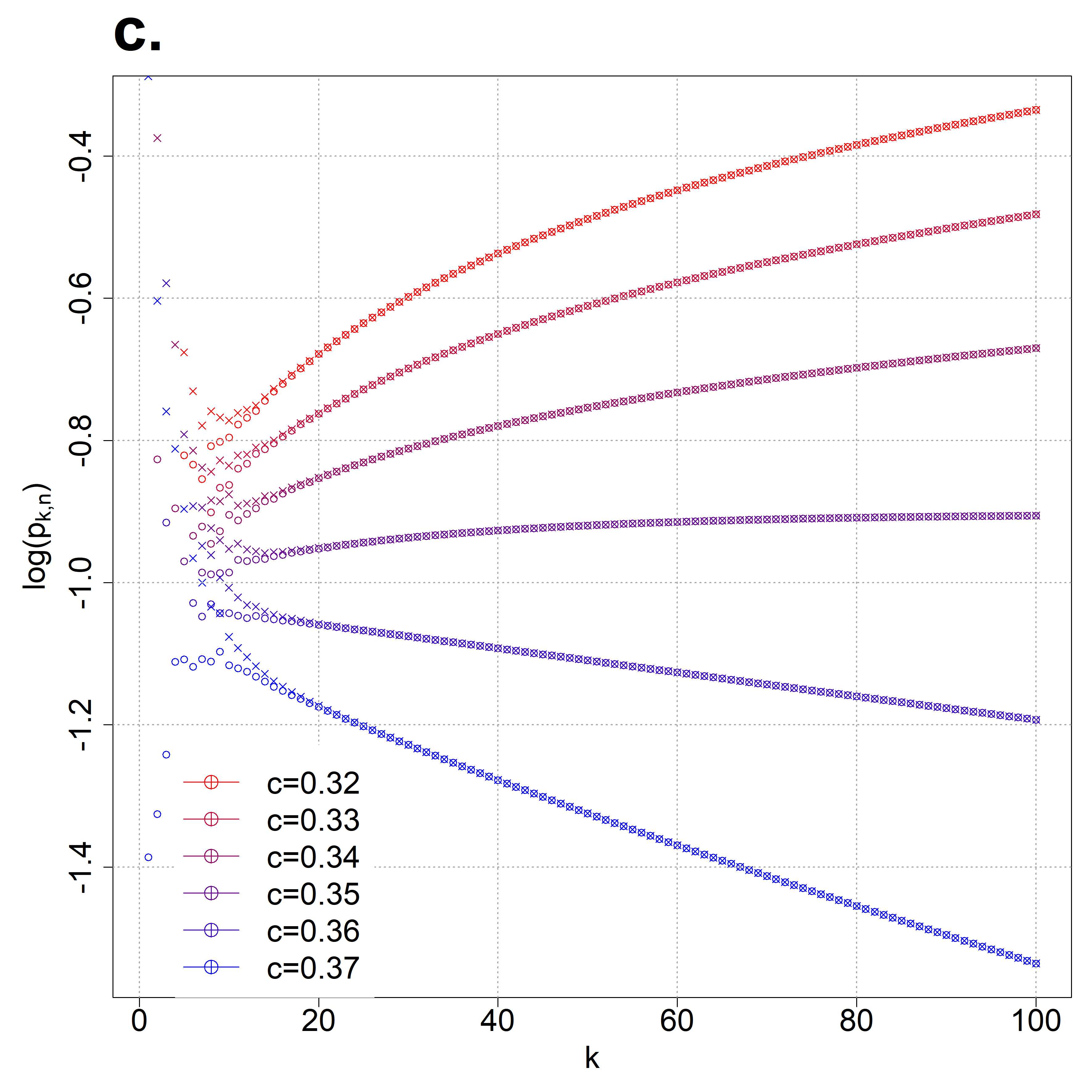}
	\caption{\footnotesize 
	 {\bf a.} Value of $p^{(U)}_{k,n}=q^{(U)}_{k,n}$ (solid lines), $q_{k,n}^{(0.5)}$ (dashed lines) and $p_{k,n}^{(0.5)}$ (dotted lines) as a function of $n$, shown on a log-scale, for $k=1,2,3,4,5$. While $p_{k,n}^{(0.5)} < p_{k,n}$ for all $k$ and $n$, when $n$ is large $q_{k,n}^{(0.5)}$ can exceed $p_{k,n}^{(U)}$. 
		{\bf b.} Value of $\log(p_{k_n,n}^{(U)})$  using the exact combinatorial formula (line-connected circles) for $k_n = \lfloor( c \log(n) \rfloor$ for $n$ from $1$ to $10^7$ and $k_n$ up to $\lfloor( c \log(10^7) \rfloor$ for each $c$. We were able to compute $p^{(U)}_{k,n}$ accurately only for small values of $k$, due to the recurrence relation in \eqref{eq: recurrence} and the alternating sum in \eqref{eq: combinatorial formula}. For $c \leq 0.8$ the curves decrease with $n$, consistent with the result that $p^{(U)}_{k_n,n} \to 0$ for this case. For $c \geq 1.2$ the curves increase towards zero with $n$ , consistent with the result that $p^{(U)}_{k_n,n} \to 1$ for this case. For $c=1$ there seems to be a slight increase in $p^{(U)}_{k_n,n}$ too, but results are inconclusive.
		{\bf c.} Value of $\log(p_{k_n,n}^{(0.5)})$ ('x' symbols) and $\log(q_{k_n,n}^{(0.5)})$ ('o' symbols) for the $\text{Bernoulli}(0.5)$ case, for $k_n = c \log(n)$ for different values of $c$. For $c < \gamma =\frac{\log(2)}{2} = 0.34657$ the log-probabilities approach $0$, whereas for $c > \gamma$ the log-probabilities decreases to $-\infty$. For all values of $c$, the ratio $\frac{q_{k_n,n}^{(0.5)}}{p_{k_n,n}^{(0.5)}}$ approaches $1$ as $n \to \infty$. 
 \label{fig:discrete_phase_transition}} 
\end{figure}

\section{Proof of Theorem \ref{thm: main result1}}
\label{sec: proof}
For every $i\geq2$, let 
\be
G_i^{1}\equiv\min\left\{k\geq1; X_{ik}< X_{1k}\right\}-1.
\ee
Then $X_1^k\preceq X_i^k$ for every $1\leq k\leq G^{1}_i$ and $X_1^k\not \preceq X_i^k$ for every $k> G^{1}_i$. In particular, this implies that for every $n,k\geq1$, 
\be\label{eq: max condition}
1 \in \maxset{k}{n}\Leftrightarrow M_n^{1}\equiv\max_{2\leq i\leq n} G_i^1\leq k-1
\ee
with the convention that a maximum over an empty-set of numbers equals zero. Thus, the asymptotic behaviour of $\textbf{1}_{\maxset{k}{n}}(1)$ as $n,k\to\infty$ is strongly related to the asymptotic behaviour of $M_n^1$ as $n\to\infty$. Observe that $M_n^1$ is a maximum of $n-1$ identically distributed {\it dependent} geometric random variables $G_2^1,G_3^1,\ldots, G_n^1$ having a success probability $P\left(X_{11}>X_{21}\right)$. The following lemma couples $M_n^1$ with a maximum of $n-1$ {\it independent} geometric random variables.

\begin{lemma}\label{proposition: basic lemma}
	 Let $\left\{X_{ij};i,j\geq1\right\}$ be iid random variables with a distribution function $F$ for which $\gamma\equiv\gamma_F$ as defined in \eqref{eq:gamma_def}. In addition, let $G_2,G_3,\ldots$ be an iid sequence of geometric random variables with success probability $\alpha\in(0,1)$. 
	For every $n\geq1$, denote $M_n\equiv M_n^{(\alpha)}\equiv \underset{2\leq i\leq n}{\max} G_i$, and assume that $\left\{G_i ; i\geq2\right\}$ and $\left\{X_{ij};i,j\geq1\right\}$ are independent. 
	
	Then, for every $\alpha\in(0,1)\setminus\{1-e^{-\gamma}\}$, the random variable
	\begin{equation}
	 N_\alpha\equiv1+\begin{cases} 
      \sup\left\{n\geq1;M_n>M_n^1\right\}\vee0 & 1-\alpha<e^{-\gamma} \\
     \sup\left\{n\geq1;M_n<M_n^1\right\}\vee0 & 1-\alpha>e^{-\gamma} 
   \end{cases}   
	\end{equation}
	is $P$-a.s. finite.
\end{lemma}

\begin{remark}\normalfont
By definition, whenever $1-\alpha<e^{-\gamma}$ (resp. $1-\alpha<e^{-\gamma}$), then $M_n\leq M_n^1$ (resp. $M_n\geq M_n^1$) for every $n\geq N_\alpha$. 
\end{remark}

\begin{proof}
	For every $k\geq1$ denote 
	\be\label{eq: tau def}
	\tau_k^1\equiv\min\left\{i\geq2;M_i^1\geq k\right\} , \ \ \tau_k \equiv \min\left\{i\geq2;M_i \geq k\right\}.
	\ee
	Conditioned on $X_1\equiv\left(X_{1j}\right)_{j=1}^\infty$, the events
	\begin{equation}
	\left\{X_1^k\preceq X_i^k\right\} , \ \ i\geq2
	\end{equation}
	are independent. Therefore, the random variables $\tau_k^1$ and $\tau_k$ are conditionally independent given $X_1$, such that (notice that the index $i$ in \eqref{eq: tau def} is not less than 2)
	\begin{equation}
	\tau_k^1 | X_1-1 \sim\text{Geo}\left(\prod_{j=1}^k S(X_{1j})\right)
	\end{equation}
	and
	\begin{equation}
	\tau_k-1 \sim \text{Geo}\left(\left(1-\alpha\right)^k\right).
	\end{equation} 
	In addition, as explained in Section \ref{sec: gamma},
	$S\left(X_{11}\right),S\left(X_{12}\right),\ldots$ are iid random variables and $-E \log S(X_{11})=\gamma\in(0,1]$. Therefore, by the strong law of large numbers
	\be\label{eq: SLLN}
	L_k\equiv\frac{1}{k}\sum_{j=1}^k\left[-\log S(X_{1j})\right]\xrightarrow{k\to\infty}\gamma , \ \ P\text{-a.s.}
	\ee
	and it follows that $e^{L_k} \xrightarrow{k\to\infty} e^\gamma$, $P$-a.s. and $e^{-kL_k}\xrightarrow{k\to\infty}0$, $P$-a.s.
	
	Consider the case where $1-\alpha<e^{-\gamma}$. Then, \eqref{eq: SLLN} implies that there exists a $P$-a.s. finite random variable $K_\alpha$ which is uniquely determined by $X_1$ such that for every $k>K_\alpha$
	\be
	(1-\alpha) e^{L_k} \leq \frac{1+(1-\alpha) e^\gamma}{2} \equiv \zeta_\alpha 
	\ee
	such that $\zeta_\alpha<1$.
	In addition, $e^{-k L_k} \leq 1$ for every $k \geq 1$. Therefore, by a well-known result about a minimum of two independent geometric random variables, deduce that
	
	\begin{align}
	\sum_{k=K_\alpha}^\infty P\left(\tau_k \leq \tau_k^1\big|X_1\right)&=\sum_{k=K_\alpha}^\infty P\left(\tau_k-1 \leq \tau_k^1-1\big|X_1\right)\\
	&=\sum_{k=K_\alpha}^\infty\frac{(1-\alpha)^k}{(1-\alpha)^k+e^{-k L_k}-(1-\alpha)^k e^{-k L_k}} \nonumber\\
	& \leq \sum_{k=K_\alpha}^\infty \left[(1-\alpha)e^{L_k}\right]^k 
	\nonumber\\
	&\leq\sum_{k=K_\alpha}^\infty\zeta_\alpha^k<\infty\nonumber.
	\end{align}
	Thus, the Lemma of Borel-Cantelli implies that
	\be
	P\left(\tau_k \leq \tau_k^1\ , \ \text{i.o}\ \big| X_1\right)=0 , \ \ P\text{-a.s.}
	\ee
	and hence 
	\be
	P\left(\tau_k \leq\tau_k^1\ , \ \text{i.o}\ \right)=E\left[P\left(\tau_k \leq \tau_k^1\ , \ \text{i.o}\ \big| X_1\right)\right]=0.
	\ee
	Therefore, $P(M_n > M_n^1 \, \ \text{i.o}) = 0$,
	which yields the required result when $1-\alpha<e^{-\gamma}$.

	Assume that $1-\alpha>e^{-\gamma}$.
	Then, applying similar arguments to those that appear above yields the existence of a P-a.s. finite random variable $K_{\alpha}$ such that for any $k>K_\alpha$:
	\be
	(1-\alpha) e^{L_k} \geq \frac{1+(1-\alpha) e^\gamma}{2} \equiv \zeta_\alpha 
	\ee
	such that $\zeta_\alpha>1$. In addition, for every $k\geq1$, $(1-\alpha)^k\leq1$ and hence 
	\begin{align}
	\sum_{k=K_\alpha}^\infty P\left(\tau_k \geq \tau_k^1\big|X_1\right)&=\sum_{k=K_\alpha}^\infty P\left(\tau_k-1 \geq \tau_k^1-1\big|X_1\right)\\
	&=\sum_{k=K_\alpha}^\infty\frac{ e^{-k L_k}}{(1-\alpha)^k+e^{-k L_k}-(1-\alpha)^k e^{-k L_k}} \nonumber \\
	&\leq\sum_{k=K_\alpha}^\infty \left[(1-\alpha)e^{L_k}\right]^{-k} 
	\nonumber\\&
	\leq\sum_{k=K_\alpha}^\infty\zeta_\alpha^{-k}<\infty.\nonumber
	\end{align}
	Thus, the claim follows from the Lemma of Borel-Cantelli using a similar argument as in the previous case. \qed \newline
\end{proof}
\subsubsection*{Proof of Theorem \ref{thm: main result1} (continuation)}
It is possible to use Lemma \ref{proposition: basic lemma} in order to show that 
\be\label{eq: extended ferguson}
\frac{M_n^1}{\log(n)}\xrightarrow{n\to\infty}\gamma^{-1} , \ \ P\text{-a.s.}
\ee
To this end, fix $\epsilon>0$ and let $0<\alpha_1,\alpha_2<1$ be such that
\begin{equation*}
(1-\alpha_1)e^\gamma<1 < (1-\alpha_2)e^\gamma
\end{equation*}
and
\be
\left|\gamma^{-1}- \left[\log(1-\alpha_l)\right]^{-1}\right|<\frac{\epsilon}{2}  , \ \ \forall l=1,2.
\ee
Consider two independent iid sequences $G_2^{(\alpha_1)},G_3^{(\alpha_1)},\ldots$ and $G_2^{(\alpha_2)},G_3^{(\alpha_2)},\ldots$ such that $G_1^{(\alpha_l)}\sim\text{Geo}(\alpha_l)$ for $l=1,2$. Respectively, define the corresponding sequences of partial maxima
\begin{equation}
 M_n^{(\alpha_l)}\equiv\max_{2\leq i\leq n}G_i^{(\alpha_l)} , \ \ n\geq2\,,
\end{equation}
for each $l=1,2$ as described in the statement of Lemma \ref{proposition: basic lemma}. Then, Lemma \ref{proposition: basic lemma} implies that there exists $P$-a.s. finite random variables $N_{\alpha_1}$ and $N_{\alpha_2}$ such that
\be
M_n^{(\alpha_1)}\leq M_n^1\leq M_n^{(\alpha_2)} , \ \ \forall n\geq\max( N_{\alpha_1}, N_{\alpha_2})\equiv N.
\ee
Furthermore, Theorem $2$ of \cite{Ferguson1993} yields that for each $l=1,2$ 
\be
\frac{M_n^{(\alpha_l)}}{\log(n)}\xrightarrow{n\to\infty} - \left[\log(1-\alpha_l)\right]^{-1}\ \ \ , \ \ P\text{-a.s.}
\ee
As a result, there exists a $P$-a.s. finite random variable $N^*\geq N$ such that for every $n\geq N^*$ one has 
\be
\gamma^{-1}-\frac{\epsilon}{2}\leq\frac{M_n^{(\alpha_1)}}{\log (n)}\leq\frac{M_n^1}{\log(n)}\leq\frac{M_n^{(\alpha_2)}}{\log(n)}\leq \gamma^{-1}+\frac{\epsilon}{2}
\ee
and hence \eqref{eq: extended ferguson} follows. 
Therefore, \eqref{eq: condition1} implies that
\be\label{eq: 1}
\liminf_{n\to\infty}\frac{k_n-1}{M_n^1}=\liminf_{n\to\infty}\frac{k_n}{\log(n)}\cdot\frac{k_n-1}{k_{n}}\cdot\frac{\log(n)}{M^1_n}>1, \ \ P\text{-a.s.}
\ee
and hence \eqref{eq: max condition} yields that $\textbf{1}_{ \maxset{k_n}{n}}(1)\xrightarrow{n\to\infty}1$, $P$-a.s. Similarly, \eqref{eq: condition2} implies that
\begin{equation}
\limsup_{n\to\infty}\frac{k_n}{M_n^1}= \limsup_{n\to\infty}\frac{k_n}{\log(n)}\cdot\frac{\log(n)}{M^1_n}< 1, \ \ P\text{-a.s.} 
\end{equation}
and hence \eqref{eq: max condition} yields that $\textbf{1}_{\maxset{k_n}{n}}(1)\xrightarrow{n\to\infty}0$, $P$-a.s. 
\qed
\newline\newline
\textbf{Acknowledgement:} The authors thank the reviewer for detecting a mistake in the  original proof of Theorem \ref{thm: main result1}. 

\bibliographystyle{unsrt}
\bibliography{Pareto.bib}

\end{document}